\documentclass[12pt]{article}
\usepackage{amssymb,amsthm,amsmath}

\theoremstyle{plain}
\newtheorem{theorem}{Theorem}

\newtheorem{lemma}[theorem]{Lemma}

\theoremstyle{definition}
\newtheorem{definition}[theorem]{Definition}
\newtheorem{example}[theorem]{Example}

\theoremstyle{remark}
\newtheorem{remark}[theorem]{Remark}

\DeclareMathOperator{\Res}{Res}

\title{A Note on Polynomial Sequences Modulo Integers}
 
\author{Mohammad Javaheri\\ Department of Mathematics\\
Siena College, School of Science\\
Loudonville, NY, 12211 USA
\\ mjavaheri@siena.edu 
}

\date{}

\begin{document}
\maketitle

\begin{abstract}
We study the uniform distribution of the polynomial sequence $\lambda(P)=(\lfloor P(k) \rfloor )_{k\geq 1}$ modulo integers, where $P(x)$ is a polynomial with real coefficients. In the nonlinear case, we show that $\lambda(P)$ is uniformly distributed in $\mathbb{Z}$ if and only if $P(x)$ has at least one irrational coefficient other than the constant term. In the case of even degree, we prove a stronger result: $\lambda(P)$ intersects every congruence class modulo every integer if and only if $P(x)$ has at least one irrational coefficient other than the constant term.\end{abstract}

{\it 2010 Mathematics Subject Classification:} Primary: 11K36.

{\it Keywords:} uniform distribution, polynomial sequence.

\section{Introduction}

A sequence $(r_k)_{k\geq 1}$ of real numbers is said to be u.d.\ mod 1 if for all $0\leq a<b <1$, 
$$\lim_{N \rightarrow \infty}\dfrac{1}{N} \#\{k \in \{1,\ldots, N\}: a\leq \{r_k\} \leq b\}=b-a,$$
where $\{r_k\}$ denotes the fractional part of $r_k$. 
An integer sequence $(a_k)_{k\geq 1}$ is said to be u.d.\ mod an integer $m\geq 2$ if, for every integer $i$, one has
\begin{equation}\label{defud}
\lim_{N \rightarrow \infty}\dfrac{1}{N} \# \left \{ k \in \{1,\ldots, N\}: a_k \equiv i \pmod m \right \} =\dfrac{1}{m}.
\end{equation}
A sequence is called u.d.\ in $\mathbb{Z}$ if it is u.d.\ mod $m$ for all $m\geq 2$ (or equivalently for all $m$ large enough). 
Given a sequence $(r_k)_{k\geq 1}$ of real numbers, if $(r_k/m)_{k\geq 1}$ is u.d.\ mod 1 for every $m\geq 2$, then $(\lfloor r_k \rfloor )_{k\geq 1}$ is u.d.\ in $\mathbb{Z}$ \cite[ch.\ 5]{kb}. Therefore, one can derive the following results on u.d.\ sequences in $\mathbb{Z}$ using existing results on u.d.\ sequences mod 1. 

\begin{example} \label{weylpol}
If $P(x)=\sum_{i=0}^n a_ix^i$ is a real polynomial with at least one irrational coefficient other than $a_0$, then $( \lfloor P(k)\rfloor )_{k\geq 1}$ is u.d.\ in $\mathbb{Z}$; see \cite[ch.\ 5]{kb}. This result follows from the generalization of Weyl's distribution theorem which was proved by Weyl himself via his differencing method. Weyl's result was a generalization of Hardy and Littlewood's result on monomials \cite{HL1}. We prove the converse of this statement for nonlinear polynomials in Theorems \ref{nonlinear3} and \ref{nonlinear2}.
\end{example}

\begin{example}\label{bxalpha}
If $f(x)=\beta x^\alpha$, where $\alpha \in (1,\infty) \backslash \mathbb{N}$ and $\beta \in (0,1]$, then $( \lfloor f(k)\rfloor )_{k\geq 1}$ is u.d.\ in $\mathbb{Z}$. This follows from Weyl's criterion together with van der Corput inequalities \cite[ch.\ 1]{kb}. 
\end{example}

\begin{example} \label{linearpol}
If $P(x)=\pm x+c$, $c\in \mathbb{R}$, then $( \lfloor P(k)\rfloor )_{k\geq 1}$ is clearly u.d.\ in $\mathbb{Z}$. Moreover, if $P(x)-P(0) \in \mathbb{Z}[x]$ and $( \lfloor P(k)\rfloor )_{k\geq 1}$ is u.d.\ in $\mathbb{Z}$, then $P(x)=\pm x +c$ for some $c\in \mathbb{R}$ \cite{niven}. 
\end{example}

\begin{example}
If $f(x)=\beta \alpha^x$ and $\beta>0$, then the sequence $( \lfloor f(k)\rfloor )_{k\geq 1}$ is u.d.\ in $\mathbb{Z}$ for almost all $\alpha>1$. This follows from Koksma's theorem \cite{Leveque}. 
\end{example}

Niven \cite{niven} showed that, given a nonlinear polynomial $P(x) \in \mathbb{Z}[x]$, there exist infinitely many integers $m$ such that $(P(k)  )_{k\geq 1}$ is not u.d.\ mod $m$. In this paper, our first goal is to extend this result to polynomials with rational coefficients in the following theorem:
\begin{theorem}\label{all2}
Let $P(x)$ be a polynomial with real coefficients. The sequence $(\lfloor P(k) \rfloor )_{k\geq 1}$ is u.d.\ in $\mathbb{Z}$ if and only if $P(x)$ has an irrational coefficient other than the constant term or $P(x)=x/l+P(0)$ for a nonzero integer $l$. 
\end{theorem}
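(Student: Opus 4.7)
My strategy is to split the biconditional into its two directions and, within the converse, to handle the linear case by hand and defer the nonlinear converse to Theorems~\ref{nonlinear3} and~\ref{nonlinear2}.

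For the \emph{if} direction, Example~\ref{weylpol} dispatches the case that $P$ has an irrational non-constant coefficient, so it suffices to verify $P(x) = x/l + P(0)$ directly. Setting $c := P(0)$ and assuming $l > 0$ (the case $l < 0$ is symmetric), the map $k \mapsto \lfloor k/l + c \rfloor$ is non-decreasing, and the preimage of an integer $n$ is the half-open interval $\{k \in \mathbb{Z} : l(n-c) \le k < l(n-c) + l\}$, which contains exactly $l$ integers. Hence each integer in the image is attained exactly $l$ times (outside a bounded transient), so every residue modulo $m$ is realized with density $1/m$ for every $m \ge 2$.

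For the \emph{only if} direction I argue contrapositively: assume every non-constant coefficient of $P$ is rational and $P(x) \neq x/l + P(0)$. If $P$ is constant, the sequence is constant and trivially not u.d.; if $\deg P \ge 2$, Theorems~\ref{nonlinear3} and~\ref{nonlinear2} give the conclusion, and these nonlinear theorems are in fact the substantive obstacle. The remaining (linear) case admits a direct argument: write $P(x) = (p/q)x + c$ with $\gcd(p,q) = 1$, $p \ne 0$, $q \ge 1$; the excluded form corresponds exactly to $|p| = 1$, so I may assume $|p| \ge 2$. Writing $k = qj + r$ with $0 \le r < q$,
\[
\lfloor P(k) \rfloor = pj + \lfloor pr/q + c \rfloor,
\]
so $\lfloor P(k) \rfloor \pmod{|p|}$ depends only on $r$. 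The sequence modulo $|p|$ is thus periodic of period dividing $q$, so the counts of the $|p|$ residue classes within one period are non-negative integers summing to $q$. Uniform distribution modulo $|p|$ would force each count to equal $q/|p|$; but $\gcd(|p|, q) = 1$ and $|p| \ge 2$ give $q/|p| \notin \mathbb{Z}$, a contradiction. Combining this linear analysis with Example~\ref{weylpol}, the direct check for $x/l + P(0)$, and Theorems~\ref{nonlinear3}--\ref{nonlinear2} assembles the full biconditional.
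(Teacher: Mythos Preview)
Your proof is correct and follows essentially the same decomposition as the paper: the nonlinear case is deferred to Theorems~\ref{nonlinear3}--\ref{nonlinear2}, the irrational-coefficient case to Example~\ref{weylpol}, and the linear rational case is handled by the periodicity/divisibility argument that the paper packages as Lemma~\ref{abm} and Theorem~\ref{linear2}. Your direct treatment of $P(x)=x/l+P(0)$ (each integer value is hit exactly $|l|$ times) is in fact a bit cleaner than the paper's bijection argument in the converse half of Lemma~\ref{abm}, but the underlying ideas are the same.
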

In the linear case, Theorem \ref{all2} follows from Theorem \ref{linear2}, and in the nonlinear case, it follows from Theorem \ref{nonlinear3} or Theorem \ref{nonlinear2}.

By adding the least integer operation to the arithmetic operations involved in defining polynomials, we obtain {\it generalized polynomials}. For example, $f(x)=\lfloor \lfloor a_1x^2+a_2 \rfloor x \rfloor + \lfloor a_3x+a_4 \rfloor x^2$ is a generalized polynomial. Hal\r{a}nd \cite{udgp} studied uniform distribution of generalized polynomials and showed that, under some conditions relating to the independence of coefficients of $f(x)$ over the rationals, the sequence $(f(k))_{k\geq 1}$ is u.d.\ mod 1.   
The second goal of this article is to study the range of the simplest generalized polynomials modulo integers, namely the range of $\lfloor P(x) \rfloor$ modulo integers, where $P(x)$ is a real polynomial. 
\begin{definition}\label{defcomplete}
We say a polynomial $P(x) \in \mathbb{R}[x]$ is {\it complete} modulo $m$ if, for every integer $n$, the equation $\lfloor P(x) \rfloor \equiv n$ (mod $m$) has a solution $x\in \mathbb{Z}$.  We say $P(x)$ is complete in $\mathbb{Z}$ if it is complete modulo every integer $m$ (or equivalently modulo all $m$ large enough).
\end{definition}
 
It follows from Example \ref{weylpol} that, if $P(x)$ has at least one irrational coefficient other than the constant term, then $P(x)$ is complete in $\mathbb{Z}$. 
The converse is not true in degree 1 (compare Theorems \ref{linear2} and \ref{linear3}). However, we will show in the following theorem that, at least in the even degree case, the converse is true. 

\begin{theorem}\label{gppmain}
Let $P(x)$ be an even-degree polynomial with real coefficients. Then the following statements are equivalent:
\begin{itemize}
\item[i.] $P(x)$ is complete modulo all primes large enough.
\item[ii.] $P(x)$ has an irrational coefficients other than the constant term. 
\item[iii.] The sequence $(\lfloor P(k) \rfloor )_{k\geq 1}$ is u.d.\ in $\mathbb{Z}$.
\item[iv.] $P(x)$ is complete modulo all integers.
\end{itemize} 
\end{theorem}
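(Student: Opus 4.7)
The four-way equivalence breaks into one substantive implication and three routine ones: (ii)$\Rightarrow$(iii) is Example~\ref{weylpol} (Weyl's equidistribution theorem); (iii)$\Rightarrow$(iv) is immediate because uniform distribution assigns each residue class positive density and in particular realizes it; (iv)$\Rightarrow$(i) is trivial since primes are integers. The only direction requiring real work is (i)$\Rightarrow$(ii), which I would prove by contrapositive: assuming every non-constant coefficient of $P$ is rational, I would construct arbitrarily large primes $p$ for which $\lfloor P\rfloor$ fails to attain some residue class modulo $p$.

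Write $P(x)=R(x)/L+a_0$ with $R\in\mathbb{Z}[x]$ of even degree $n\ge 2$ and $L\in\mathbb{Z}_{>0}$. Setting $k=Lm+j$ with $j\in\{0,\ldots,L-1\}$ and peeling off the integer part of $P(k)$ yields the identity
\[
L\lfloor P(k)\rfloor \;=\; R(k)+c_{k\bmod L},\qquad c_j:=L\lfloor a_0+(R(j)\bmod L)/L\rfloor-(R(j)\bmod L)\in\mathbb{Z}.
\]
For any prime $p$ coprime to $L$, the residue $k\bmod p$ ranges over all of $\mathbb{F}_p$ as $k$ varies within a fixed class modulo $L$. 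Hence the set of residues of $\lfloor P(\mathbb{Z})\rfloor$ modulo $p$ equals $L^{-1}\bigl(R(\mathbb{F}_p)+C\bigr)$, where $C=\{c_0,\ldots,c_{L-1}\}\subseteq\mathbb{Z}$ is independent of $p$. Thus $P$ is complete modulo $p$ iff $R(\mathbb{F}_p)+C=\mathbb{F}_p$, and the task reduces to producing arbitrarily large primes $p$ for which this Minkowski sum is proper in $\mathbb{F}_p$.

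To achieve this I would invoke Hilbert's irreducibility theorem to choose an integer $Y$ such that $R(x)-Y+c$ is irreducible in $\mathbb{Q}[x]$ for every $c\in C$; let $K_c$ be its splitting field over $\mathbb{Q}$ and $G_c=\mathrm{Gal}(K_c/\mathbb{Q})$ the corresponding Galois group, a transitive subgroup of $S_n$ with $n\ge 2$. By Jordan's theorem each $G_c$ contains a derangement (fixed-point-free element). For $Y$ in a sufficiently generic Hilbert subset the fields $K_{c_0},\ldots,K_{c_{L-1}}$ are linearly disjoint over $\mathbb{Q}$, so the Galois group of their compositum identifies with $\prod_c G_c$. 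Applying Chebotarev's density theorem to this compositum then yields a positive proportion of primes $p$ whose Frobenius acts as a derangement on the roots of every $R(x)-Y+c$; for each such $p$, the equation $R(x)=Y-c$ has no solution in $\mathbb{F}_p$ for any $c\in C$, so $Y\notin R(\mathbb{F}_p)+C$ and $P$ is not complete modulo $p$.

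The principal obstacle is the linear-disjointness step, i.e., showing that for a Hilbert-generic $Y$ the $L$ splitting fields are independent enough that the no-fixed-point conditions across factors multiply to a positive joint density. The even-degree assumption enters through Fried's theorem on Schur's conjecture, which guarantees that no polynomial of even degree $\ge 2$ is a permutation polynomial of $\mathbb{F}_p$ for infinitely many primes $p$; otherwise one could have $R(\mathbb{F}_p)=\mathbb{F}_p$ for a positive density of primes, automatically forcing $R(\mathbb{F}_p)+C=\mathbb{F}_p$ there and defeating the argument.
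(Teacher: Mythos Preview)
Your reduction and the three easy implications match the paper exactly. The difference, and the gap, is entirely in the argument for (i)$\Rightarrow$(ii).

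Your approach via Hilbert irreducibility, Jordan's theorem, and Chebotarev is more elaborate than what the paper does, and the linear-disjointness step you flag is a genuine gap, not a technicality. You need a single Frobenius element that acts without fixed points on the roots of \emph{every} factor $R(x)-Y+c_j$ simultaneously; knowing that each Galois group individually contains a derangement is not enough unless you control the Galois group of the compositum. Notice also that nothing in your main argument uses the even-degree hypothesis: Jordan's theorem applies for all $n\ge 2$. If the disjointness could be arranged as easily as you suggest, you would have proved the theorem for odd degree as well, which the paper explicitly leaves open (see the Remark after Theorem~\ref{mon}). So the obstacle is exactly where the difficulty of the odd-degree case is hiding.

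The paper's argument sidesteps all of this with a much cheaper use of the even-degree hypothesis. Since $Q(x)=N(P(x)-P(0))$ has even degree, one can choose a single integer $M$ so large that every polynomial $Q(x)+M+A_j$ has no \emph{real} zeros. Their product $R(x)=\prod_j(Q(x)+M+A_j)$ then has no real zeros, so complex conjugation acts on the roots of $R$ without fixed points. A direct application of Chebotarev to the splitting field of $R$ (Lemma~\ref{primeRp}) produces infinitely many primes $p$ for which $R$ has no zeros modulo $p$; for such $p$ the value $-M$ is missed by every $Q(x)+A_j$, and hence $\lfloor P(x)\rfloor$ misses the residue $L^{-1}(-M)$ modulo $p$. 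No irreducibility, no Jordan, no disjointness: the single explicit derangement is complex conjugation, and the even-degree hypothesis is what makes it available. Your invocation of Fried's theorem on Schur's conjecture is therefore a red herring; that is not where even degree enters.
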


We prove Theorem \ref{gppmain} in Section \ref{compevendeg}. Finally, in Section \ref{compmonom}, we consider polynomials of the form $P(x)=ax^n+c$, where $n>1$ and $a, c \in \mathbb{R}$. In Theorem \ref{mon}, we show that $P(x)$ is complete modulo all primes large enough if and only if $a\notin \mathbb{Q}$. 

\section{u.d.\ polynomial sequences }\label{udpolyseq}

In this section, we determine all polynomials $P(x) \in \mathbb{R}[x]$ for which the sequence $( \lfloor P(k) \rfloor )_{k\geq 1}$ is u.d.\ in $\mathbb{Z}$. Niven \cite[Thm.\ 3.1]{niven} showed that the sequence $( \lfloor \alpha k \rfloor )_{k\geq 1}$ is u.d.\ in $\mathbb{Z}$ if and only if $\alpha$ is irrational or $\alpha=1/l$ for some nonzero integer $l$. By Example \ref{weylpol}, the sequence $( \lfloor \alpha k+ \beta \rfloor )_{k\geq 1}$ is u.d.\ in $\mathbb{Z}$ for every irrational number $\alpha$. We will prove in Theorem \ref{linear2} that if the sequence $(\lfloor \alpha k+ \beta \rfloor )_{k\geq 1}$ is u.d.\ in $\mathbb{Z}$, then either $\alpha$ is irrational or $\alpha=1/l$ for some nonzero integer $l$. First, we need a lemma.

\begin{lemma}\label{abm}
Let $a,b \in \mathbb{Z}$ such that $\gcd(a,b)=1$ and $b>0$. Let $\beta \in \mathbb{R}$. Then the sequence $(\lfloor ak/b+\beta \rfloor )_{k\geq 1}$ is u.d.\ mod $m$ if and only if $\gcd(a,m)=1$. 
\end{lemma}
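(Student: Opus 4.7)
The plan is to reduce the statement to a computation about arithmetic progressions modulo $m$. I would begin by splitting $k$ according to its residue modulo $b$: writing $k=bq+r$ with $0\leq r<b$, one checks directly that
\[
  \lfloor ak/b+\beta\rfloor = aq + c_r, \qquad c_r := \lfloor ar/b + \beta\rfloor.
\]
Thus the sequence $(\lfloor ak/b+\beta\rfloor)_{k\geq 1}$ is the interleaving of $b$ subsequences indexed by $r\in\{0,1,\ldots,b-1\}$, each of the form $(aq+c_r)_{q\geq 0}$, up to boundary effects that do not affect asymptotic densities.

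Next I would set $d:=\gcd(a,m)$ and compute the asymptotic frequency of each residue $i\pmod{m}$ in the full sequence. For fixed $r$, the sequence $(aq+c_r)_{q\geq 0}$ is periodic modulo $m$ with period $m/d$, and its image is precisely the coset $c_r+d\mathbb{Z}/m\mathbb{Z}$, each element being hit with frequency $d/m$. Writing
\[
  n_j := \#\{\, r\in\{0,1,\ldots,b-1\} : c_r\equiv j \pmod{d}\,\},
\]
and averaging the $b$ subsequences (each having density $1/b$ in $\mathbb{N}$), the asymptotic frequency of residue $i$ modulo $m$ in the full sequence equals $n_{i\bmod d}\cdot d/(bm)$.

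For the necessity direction, uniform distribution modulo $m$ forces this frequency to equal $1/m$ for every $i$, hence $n_j=b/d$ for every $j\in\{0,1,\ldots,d-1\}$. The key observation I plan to exploit is that $d\mid a$ together with $\gcd(a,b)=1$ forces $\gcd(d,b)=1$, so $b/d$ fails to be an integer whenever $d>1$. Since each $n_j$ is a nonnegative integer, this contradicts u.d.\ unless $d=1$, i.e.\ $\gcd(a,m)=1$. Conversely, when $\gcd(a,m)=1$ each subsequence $(aq+c_r)_q$ cycles through all residues modulo $m$ with equal frequency $1/m$, and averaging over $r$ yields u.d.\ of the full sequence.

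The only delicate point is the integrality obstruction just described; I never need to identify the specific integers $c_r$ or compute the $n_j$'s individually---only that they are nonnegative integers summing to $b$. The rest is bookkeeping about floors of rational linear functions together with the elementary fact that the orbit of multiplication by $a$ in $\mathbb{Z}/m\mathbb{Z}$ is the subgroup of order $m/d$.
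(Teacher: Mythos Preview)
Your argument is correct. The decomposition $k=bq+r$ gives $\lfloor ak/b+\beta\rfloor=aq+c_r$, and the frequency computation $n_{i\bmod d}\cdot d/(bm)$ is accurate; the integrality obstruction (all $n_j$ equal to $b/d$ forces $d\mid b$, impossible since $\gcd(d,b)=1$ when $d>1$) cleanly gives necessity, and the converse follows because each arithmetic progression $(aq+c_r)_q$ is itself u.d.\ mod $m$ when $\gcd(a,m)=1$.

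The paper's route is organized differently. For necessity it does not decompose into $b$ subsequences but instead passes from u.d.\ mod $m$ to u.d.\ mod $d$, observes that the full sequence mod $d$ has period $b$, and reads off $t/b=1/d$ for the count $t$ in one period---arriving at the same divisibility $d\mid b$. For sufficiency it works with the period $bm$ of the sequence mod $m$ and shows that the shift $k\mapsto k+b$ gives a bijection $T_i\to T_{a+i}$, so all $|T_i|$ coincide. Your approach is a bit more explicit: it names the actual density of each residue class, which makes the ``if and only if'' visible in a single formula, whereas the paper's shift/bijection argument avoids computing any $c_r$ or $n_j$ at all. Both are elementary; yours yields slightly more information (the exact frequencies), the paper's is slightly more self-contained in that it never appeals to the structure of $a\mathbb{Z}/m\mathbb{Z}$.
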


\begin{proof}
First, suppose that the sequence $(\lfloor ak/b+\beta \rfloor )_{k \geq 1}$ is u.d.\ mod $m$. Suppose that $d=\gcd(a,m)>1$, and we derive a contradiction. Since we have assumed that $( \lfloor ak/b+\beta \rfloor )_{k\geq 1}$ is u.d.\ mod $m$, it follows that the sequence $(\lfloor ak/b+\beta \rfloor )_{k\geq 1}$ is u.d.\ mod $d$ \cite[Thm.\ 5.1]{niven}. One notes that the sequence $( \lfloor ak/b+\beta \rfloor )_{k\geq 1}$ modulo $d$ is periodic with period $b$. Therefore, if the number of solutions of $\lfloor ak/b+\beta \rfloor \equiv 0$ (mod $d$) with $1\leq k \leq b$ is given by $t$, then the number of solutions of $\lfloor ak/b+\beta \rfloor \equiv 0$ (mod $d$) with $1\leq k \leq sb$ is given by $st$, and so
$$\lim_{s \rightarrow \infty}\dfrac{1}{sb} \# \left \{ k \in \{1,\ldots, sb\}: \lfloor ak/b+\beta \rfloor  \equiv 0 \pmod d \right \} =\lim_{s \rightarrow \infty}\dfrac{st}{sb}=\dfrac{t}{b}.$$
On the other hand, this limit must equal $1/d$ by the definition of u.d.\ mod $d$ (see equation \eqref{defud}). It follows that $t/b=1/d$, and so $d \mid b$. Since $d \mid a$ and $\gcd(a,b)=1$, we have a contradiction. 

For the converse, suppose that $\gcd(a,m)=1$. One notes that the sequence $(\lfloor ak/b+\beta \rfloor )_{k\geq 1}$ is periodic modulo $m$ with period $bm$. For each $0\leq i \leq m-1$, let $T_i$ denote the subset of elements $k \in \{1,\ldots, bm\}$ such that $\lfloor ak/b+\beta \rfloor \equiv i$ (mod $m$). We show that $|T_i|=b$ for all $0\leq i \leq m-1$. Fix $0 \leq i \leq m-1$, and let $T_i=\{t_1,\ldots, t_r\}$. For each $1\leq j \leq r$, we have 
$$\lfloor a(t_j+b)/b+\beta \rfloor  \equiv a+\lfloor at_j/b+\beta \rfloor \equiv a+i \pmod m.$$
In other words, the map $t_j \mapsto t_j+b$ is a one-to-one map from $T_i$ to $T_{a+i}$, where $t_j+b$ is computed modulo $bm$ and $a+i$ is computed modulo $m$. It follows that $|T_{a+i}| \geq |T_i|$, and so $|T_{qa+i}| \geq |T_i|$ for all $q \geq 0$, where $qa+i$ is computed modulo $bm$. Since $\gcd(a,m)=1$, we conclude that $|T_{i'}| \geq |T_i|$ for all $i,i'=0,\ldots, m-1$, and so $|T_i|=b$ for all $i=0,\ldots, m-1$. Thus, for $N=Qbm+R$, $0\leq R<bm$, the number of solutions of $\lfloor ak/b+\beta \rfloor \equiv i$ (mod $m$) is between $Qb$ and $(Q+1)b$, which is sufficient to verify the definition of u.d.\ mod $m$ in \eqref{defud}. 
\end{proof}

\begin{theorem}\label{linear2}
Let $\alpha, \beta \in \mathbb{R}$. Then the sequence $(\lfloor \alpha k + \beta \rfloor)_{k\geq 1}$ is u.d.\ in $\mathbb{Z}$ if and only if $\alpha$ is irrational or $\alpha=1/l$ for some nonzero integer $l$. 
\end{theorem}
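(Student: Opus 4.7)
The plan is to split the equivalence into its two directions and reduce each to tools already in hand: Example \ref{weylpol} for the irrational case and Lemma \ref{abm} for the rational case.

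For sufficiency I would argue as follows. If $\alpha$ is irrational, then the polynomial $P(x) = \alpha x + \beta$ has an irrational non-constant coefficient, and Example \ref{weylpol} immediately yields u.d.\ in $\mathbb{Z}$. If instead $\alpha = 1/l$ for a nonzero integer $l$, I would set $a = \operatorname{sgn}(l) \in \{\pm 1\}$ and $b = |l|$, so that $\alpha = a/b$ with $\gcd(a,b) = 1$ and $b > 0$. Since $\gcd(\pm 1, m) = 1$ for every $m \geq 2$, Lemma \ref{abm} gives u.d.\ mod $m$ for every $m$, and hence u.d.\ in $\mathbb{Z}$.

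For necessity, I would assume the sequence is u.d.\ in $\mathbb{Z}$ and that $\alpha$ is rational, and deduce that $\alpha = 1/l$ for some nonzero integer $l$. Writing $\alpha = a/b$ in lowest terms with $b > 0$, I first dispatch the degenerate case $a = 0$: then the sequence is the constant $\lfloor \beta \rfloor$, which fails u.d.\ mod any $m \geq 2$. With $a \neq 0$, Lemma \ref{abm} forces $\gcd(a,m) = 1$ for every $m \geq 2$. If $|a| > 1$, then any prime divisor $p$ of $a$ gives a violating choice $m = p$, a contradiction; therefore $|a| = 1$ and $\alpha = \pm 1/b = 1/(\pm b)$ has the required form.

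I do not anticipate any serious obstacle, since the combinatorial core of the argument has already been packaged into Lemma \ref{abm}. The only points needing care are matching the sign conventions in that lemma (where $b > 0$ but $a$ may be negative) with the sign freedom in the expression $\alpha = 1/l$, and explicitly disposing of the degenerate case $a = 0$ in the lowest-terms representation before invoking the lemma.
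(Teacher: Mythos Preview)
Your proposal is correct and follows essentially the same route as the paper: Example \ref{weylpol} for irrational $\alpha$, and Lemma \ref{abm} in both directions for rational $\alpha$. The only differences are cosmetic---you exhibit a prime divisor of $a$ as the failing modulus where the paper simply takes $m=|a|$, and you explicitly dispose of the degenerate case $\alpha=0$, which the paper's case split (\,$|a|>1$\,) silently omits.
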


\begin{proof}
If $\alpha$ is irrational, then the claim follows from Example \ref{weylpol} \cite[Thm.\ 3.2]{niven}. If $\alpha=1/l$ for some nonzero integer $l$, then the sequence $(\lfloor k/l+\beta \rfloor)_{k\geq 1}$ is u.d.\ mod $m$ for every $m$ by Lemma \ref{abm}. Thus, suppose that $\alpha=a/b$ for integers $a,b$ with $\gcd(a,b)=1$, $|a|>1$, and $b>0$. It follows from Lemma \ref{abm} that the sequence $( \lfloor ak/b+\beta \rfloor )_{k\geq 1}$ is not u.d.\ mod $|a|$, hence it is not u.d.\ in $\mathbb{Z}$. 
\end{proof}

Next, we discuss nonlinear polynomials. Niven \cite[Thm.\ 4.1]{niven} proved that if $P(x)$ is a nonlinear polynomial with integer coefficients, then there exist infinitely many integers $m$ such that $(P(k))_{k\geq 1}$ is not u.d.\ mod $m$. We prove generalizations of this statement in the next two theorems. 

\begin{theorem}\label{nonlinear3}
Let $P(x)$ be a nonlinear polynomial with real coefficients. If $P(x)$ has no irrational coefficients other than the constant term, then there exists infinitely many mutually coprime integers $m$ such that $(\lfloor P(k) \rfloor )_{k \geq 1}$ is not u.d.\ mod $m$. 
\end{theorem}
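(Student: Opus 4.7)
The plan is to reduce to Niven's Theorem~4.1 on integer-coefficient polynomials via a Fourier analysis on $\mathbb{F}_p$. I write $P(x) = c_0 + R(x)/L$ with $R \in \mathbb{Z}[x]$ of degree $n \geq 2$ and $L$ a positive integer chosen so that $R = L(P - c_0) \in \mathbb{Z}[x]$; when $L = 1$ the statement reduces immediately to Niven applied to $R$ since $\lfloor P(k)\rfloor = R(k) + \lfloor c_0\rfloor$, so I assume $L \geq 2$. For a prime $p$ coprime to $L$ and to the leading coefficient of $R$, the sequence $\lfloor P(k)\rfloor \bmod p$ is periodic of period $Lp$, and the Taylor expansion $R(Lm+j) = R(j) + LH_j(m)$ with $H_j \in \mathbb{Z}[m]$ of degree $n$ yields $\lfloor P(Lm+j)\rfloor = H_j(m) + \gamma_j$, where $\gamma_j = \lfloor c_0 + R(j)/L\rfloor$. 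Because $\gcd(L, p) = 1$, the multiset $\{\lfloor P(k)\rfloor \bmod p : 1 \leq k \leq Lp\}$ is the union of $L$ shifts by explicit $s_0, \ldots, s_{L-1} \in \mathbb{F}_p$ of the multiset $\mathcal{S} = \{L^{-1} R(i)\bmod p : i \in \mathbb{F}_p\}$. Letting $f$ be the multiplicity function of $\mathcal{S}$ and $\mu = \sum_j \delta_{s_j}$, uniform distribution modulo $p$ is equivalent to $\widehat{f}(\chi)\widehat{\mu}(\chi) = 0$ for every nontrivial additive character $\chi$ of $\mathbb{F}_p$.

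A key observation is that the zero set of $\widehat\mu$ on nontrivial characters is controlled uniformly in $p$: after a change of variable, $\widehat\mu(\chi)$ evaluates a polynomial $Q(y) = \sum_{e=0}^{L-1} n_e y^e$ of degree at most $L-1$ at a nontrivial $p$-th root of unity $y$, where $n_e = \#\{j : e_j = e\}$ and $e_j \in \{0, \ldots, L-1\}$ are integers determined by $R$ and $c_0$ but independent of $p$. Since $Q(1) = L \neq 0$, $\widehat\mu$ vanishes on at most $L-1$ nontrivial characters. Hence it suffices to exhibit infinitely many primes $p$ for which $\widehat f$ has more than $L - 1$ nonzero values among nontrivial characters. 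Since $\widehat f(\chi)$ is, up to a change of variable, the Weil exponential sum $\sum_i e^{2\pi i b R(i)/p}$, by Parseval $\sum_{\chi \neq 1}|\widehat f(\chi)|^2 = p(N(p) - p)$ with $N(p) = \#\{(x, y) \in \mathbb{F}_p^2 : R(x) \equiv R(y) \pmod p\}$.

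The nonlinearity of $R$ forces $(R(x) - R(y))/(x - y) \in \mathbb{Z}[x, y]$ to be a nonconstant polynomial whose zero locus contains an absolutely irreducible curve. By the Chebotarev density theorem, a positive proportion of primes $p$ admit such a curve defined over $\mathbb{F}_p$, and Lang--Weil then gives $N(p) - p \geq c_1 p - C\sqrt{p}$ for such primes with a constant $c_1 > 0$. Together with the Weil bound $|\widehat f(\chi)|^2 \leq (n-1)^2 p$, this forces the number of nonzero nontrivial Fourier coefficients of $f$ to be $\Omega(p)$, eventually exceeding $L - 1$. For each such prime $p$, some nontrivial $\chi$ has $\widehat f(\chi)\widehat\mu(\chi) \neq 0$ and uniform distribution mod $p$ fails; these primes provide infinitely many mutually coprime $m$'s. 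The main obstacle is this quantitative lower bound: Niven's theorem alone only guarantees a single nonzero Fourier coefficient of $f$, whereas the up to $L - 1$ zeros of $\widehat\mu$ require $\Omega(p)$ of them, accessible only through the algebraic-geometric input of Lang--Weil combined with Chebotarev.
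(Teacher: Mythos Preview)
Your argument is correct in outline and reaches the conclusion, but by a genuinely different route from the paper. The paper's proof is elementary: it clears denominators to obtain $Q(x)=N(P(x)-P(0))\in\mathbb{Z}[x]$, chooses $a$ so that $Q'(a)$ has an arbitrarily large prime factor $p>6N$, and uses Hensel's lemma to produce at least $p$ solutions of $Q(x)\equiv Q(a)\pmod{p^2}$ among $\{1,\dots,p^2\}$; a short pigeonhole count then shows some residue class mod $p^2$ is hit by more than $6s$ of the first $sp^2$ terms, contradicting uniform distribution modulo $m=p^2$. Your route instead works modulo primes $p$ directly: you recognise the distribution of $\lfloor P(k)\rfloor\bmod p$ as an additive convolution $f*\mu$ on $\mathbb{F}_p$, bound the zeros of $\widehat\mu$ by a fixed polynomial of degree at most $L-1$, and then combine the Weil bound with a Lang--Weil/Chebotarev input on the curve $(R(x)-R(y))/(x-y)=0$ to force $\Omega(p)$ nonzero Fourier coefficients of $\widehat f$. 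The paper's method buys elementarity---no exponential-sum estimates or point counting are needed---at the price of working modulo $p^2$ rather than $p$; yours buys failure of equidistribution at primes themselves and a clean harmonic-analytic picture, at the cost of substantial machinery. Two small points to tighten: first, reducing the case $L=1$ to Niven's Theorem~4.1 does not immediately yield \emph{mutually coprime} moduli, but your own argument handles $L=1$ uniformly (there $\widehat\mu$ never vanishes), so just drop that special case; second, the assertion that $\widehat\mu(\chi)$ is a degree-$(L-1)$ polynomial in a root of unity with $p$-independent coefficients deserves one line of justification, since the shifts $s_j$ involve $L^{-1}\bmod p$ and the floor of $c_0+R(j)/L$---the claim is true after factoring out a suitable power of $y$, but it is not quite the bare $\sum_e n_e y^e$ you wrote.
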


\begin{proof} Let $P(x)=\sum_{i=0}^n a_ix^i$ such that $a_i=r_i/s_i \in \mathbb{Q}$ with $\gcd(r_i,s_i)=1$ for all $1\leq i \leq n$. Let $N$ be the least common multiple of $s_i$, $1\leq i \leq n$, and let $Q(x)=N(P(x)-P(0)) \in \mathbb{Z}[x]$. Choose an integer $a$ such that $Q^\prime(a)$ has an arbitrarily large prime factor $p>6N$ (this can be done, since $Q'(x)$ is a nonconstant polynomial). We define $f(x)=Q(x)-Q(a)$. Then $f(a) \equiv 0$ (mod $p^2$) and $f^\prime(a) \equiv 0$ (mod $p$). It follows from Hensel's Lemma \cite[Thm.\ 3.4.1]{Gouvea} that $f(a+kp) \equiv f(a) \equiv 0$ (mod $p^2$) for all integer values of $k$. In particular, the equation $Q(x) \equiv Q(a)$ (mod $p^2$) has at least $p$ solutions for $x\in \{1,\ldots, p^2\}$. It follows that, given an integer $s\geq 1$, we have $|T|\geq sp$, where $T$ denotes the set of solutions $x \in \{1,\ldots, sp^2\}$ of $Q(x) \equiv Q(a)$ (mod $p^2$).

We show that the sequence $(\lfloor P(k) \rfloor )_{k\geq 1}$ is not u.d.\ mod $m=p^{2}$. On the contrary, suppose that $(\lfloor P(k) \rfloor )_{k\geq 1}$ is u.d.\ mod $p^2$. It follows from the definition that for each $0 \leq t < p^2$,
$$\lim_{s \rightarrow \infty}\dfrac{1}{sp^2} |S_t|=\dfrac{1}{p^2},$$
where $S_t$ is the set of $x\in  \{1,\ldots, sp^2\}$ such that $\lfloor P(x) \rfloor  \equiv t$ (mod $p^2$). In particular, for $s$ large enough, one has
\begin{equation}\label{lims2}
\dfrac{1}{sp^2}|S_t|\leq \dfrac{2}{p^2}
\end{equation}
for all $0\leq t <p^2$. If $x\in T$, then $Q(x)=Q(a)+\alpha(x)\cdot p^2$ for some $\alpha(x)\in \mathbb{Z}$. It follows that 
$$\lfloor P(x)-P(0) \rfloor =\left \lfloor \dfrac{Q(x)}{N} \right \rfloor =\left \lfloor \dfrac{Q(a)+\alpha(x) \cdot p^2}{N} \right \rfloor.$$
We note that $\lfloor (Q(a)+(\beta+N) p^2)/N \rfloor \equiv \lfloor (Q(a)+\beta p^2)/N \rfloor$ (mod $p^2$) for every $\beta \in \mathbb{Z}$, hence there are at most $N$ congruence classes modulo $p^2$ among the values $\lfloor P(x)-P(0) \rfloor$. Since $|T| \geq sp>6sN$, it follows that there exists an integer $r$ such that the equation $\lfloor P(x)-P(0) \rfloor \equiv r$ (mod $p^2$) has more than $6s$ solutions in the set $\{1,\ldots, sp^2\}$. Let $S$ be the set of $x\in T$ such that $\lfloor P(x)-P(0) \rfloor \equiv r$ (mod $p^2$). In particular $|S|>6s$. 

For every $x\in \mathbb{Z}$, we have $\lfloor P(x) \rfloor = \lfloor P(x)-P(0) \rfloor+\lfloor P(0) \rfloor +u$ for some $u \in \{-1,0,1\}$, and so $ S \subseteq  S_{t_{-1}} \cup S_{t_0} \cup S_{t_1}$, where $t_u=r+\lfloor P(0) \rfloor +u$ (computed modulo $p^2$). Therefore,
 $$\dfrac{1}{sp^2}|S_{t_{-1}}|+\dfrac{1}{sp^2}|S_{t_0}|+\dfrac{1}{sp^2}|S_{t_1}| \geq \dfrac{1}{sp^2} \left |S_{t_{-1}} \cup S_{t_0} \cup S_{t_1} \right | \geq \dfrac{1}{sp^2} \left |S \right | >\dfrac{6}{p^2},$$
which contradicts the inequality \eqref{lims2} as $s \rightarrow \infty$. 
\end{proof}

We now prove a statement that is stronger than the statement of Theorem \ref{nonlinear3}. 

\begin{theorem}\label{nonlinear2}
Let $P(x)$ be a nonlinear polynomial with real coefficients. If the sequence $(\lfloor P(k) \rfloor )_{k\geq 1}$ is u.d.\ mod all primes large enough, then $P(x)$ has at least one irrational coefficient other than the constant term. 
\end{theorem}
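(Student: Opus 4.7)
The plan is to prove the contrapositive. Assuming every non-constant coefficient of $P$ is rational, write $P(x)=Q(x)/N+a_{0}$ with $Q\in\mathbb{Z}[x]$, $Q(0)=0$, $\deg Q=n\geq 2$, and $N\in\mathbb{Z}_{>0}$; the goal is to produce infinitely many primes $p$ for which $(\lfloor P(k)\rfloor)_{k\geq 1}$ is not u.d.\ mod $p$. Splitting $k=Nj+s$ with $0\leq s<N$, a direct computation yields $\lfloor P(k)\rfloor=C_{s}+g_{s}(j)$, where $g_{s}(j)=(Q(Nj+s)-Q(s))/N\in\mathbb{Z}[j]$ and $C_{s}\in\mathbb{Z}$. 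The polynomial identity $g_{s}(j)=g_{0}(j+s/N)-g_{0}(s/N)$ (valid in $\mathbb{Q}[j]$) then gives, for any prime $p$ coprime to $N$,
\[
\lfloor P(k)\rfloor \equiv d_{s}+g_{0}(kN^{-1}) \pmod{p},
\]
with $d_{s}:=C_{s}-g_{0}(sN^{-1})\in\mathbb{F}_{p}$. A brief computation shows $Nd_{s}\equiv e_{s}\pmod{p}$, where $e_{s}=N\lfloor a_{0}\rfloor+N\epsilon_{s}-r_{s}$ is a fixed integer independent of $p$ (with $r_{s}=Q(s)\bmod N$ and $\epsilon_{s}\in\{0,1\}$ tracking the fractional part of $a_{0}$).

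By the Chinese Remainder Theorem, as $k$ ranges over $\{1,\dots,Np\}$ the pair $(k\bmod N,\,kN^{-1}\bmod p)$ visits $\{0,\dots,N-1\}\times\mathbb{F}_{p}$ exactly once; counting gives $\#\{k\leq Np:\lfloor P(k)\rfloor\equiv t\pmod p\}=\sum_{s}\mu_{p}(t-d_{s})$, where $\mu_{p}(y)=\#\{x\in\mathbb{F}_{p}:g_{0}(x)=y\}$. Hence u.d.\ mod $p$ is equivalent to the convolution identity $\mu_{p}*\nu_{p}\equiv N$ on $\mathbb{F}_{p}$ (with $\nu_{p}=\sum_{s}\delta_{d_{s}}$), which by discrete Fourier analysis is the condition $\hat{\mu}_{p}(\chi)\hat{\nu}_{p}(\chi)=0$ for every nontrivial additive character $\chi$ of $\mathbb{F}_{p}$.

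The key observation is that $\hat{\nu}_{p}$ is controlled by a single $p$-independent object: for $\chi_{k}(x)=e^{2\pi ikx/p}$, the congruence $d_{s}\equiv e_{s}N^{-1}\pmod{p}$ gives $\hat{\nu}_{p}(\chi_{k})=F(\zeta^{kN^{-1}})$ with $\zeta=e^{2\pi i/p}$ and $F(z)=\sum_{s=0}^{N-1}z^{e_{s}}\in\mathbb{Z}[z,z^{-1}]$ fixed. Since $F$ has only finitely many complex roots, only finitely many primes $p$ can have a primitive $p$-th root of unity among them (such a prime would force the cyclotomic polynomial $\Phi_{p}$, of degree $p-1$, to divide $F$, impossible once $p-1$ exceeds $\deg F$). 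So for all sufficiently large primes $p$, $\hat{\nu}_{p}(\chi)\neq 0$ for every nontrivial $\chi$, and u.d.\ mod $p$ collapses to the requirement that $\hat{\mu}_{p}(\chi)=0$ for every nontrivial $\chi$, i.e., $g_{0}$ is a permutation polynomial of $\mathbb{F}_{p}$. Since $g_{0}(x)=Q(Nx)/N$ with $N$ invertible mod $p$, this is equivalent to $Q$ being a permutation polynomial of $\mathbb{F}_{p}$.

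The main obstacle, and the closing step, is to rule out $Q\in\mathbb{Z}[x]$ of degree $\geq 2$ being a permutation polynomial modulo every sufficiently large prime. Factoring $Q(x)-Q(0)=x^{n_{0}}\widehat{Q}(x)$ with $\widehat{Q}(0)\neq 0$ and $n_{0}\geq 1$, the permutation hypothesis forces $\widehat{Q}$ to have no roots in $\mathbb{F}_{p}$ for all large $p$. Schur's elementary theorem --- any nonconstant polynomial in $\mathbb{Z}[x]$ has roots modulo infinitely many primes --- then forces $\widehat{Q}$ to be constant, yielding $Q(x)=cx^{n_{0}}+Q(0)$. But $x\mapsto cx^{n_{0}}$ is a bijection of $\mathbb{F}_{p}$ only when $\gcd(n_{0},p-1)=1$, which by Dirichlet's theorem fails for all primes $p\equiv 1\pmod{n_{0}}$ whenever $n_{0}\geq 2$. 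Thus $n_{0}=1$ and $Q$ is linear, contradicting $\deg Q\geq 2$.
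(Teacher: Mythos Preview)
Your argument is correct, and it proceeds along a genuinely different route from the paper's. The paper, after the same periodic reduction to the $N$ integer polynomials $P_j(x)=N\lfloor P(Nx+j)\rfloor$, shifts them by a Hilbert-irreducibility constant $M$ so that each $f_j=P_j+M$ is irreducible, uses resultants to separate their roots modulo large primes, and then invokes the Chebotarev density theorem to produce infinitely many primes $p$ at which $\prod_j f_j$ splits completely; this forces the single residue class $-M$ to have $nN>N$ preimages and breaks uniform distribution. Your approach instead linearises the $N$ branches to a single polynomial $g_0$ plus additive shifts $d_s$, and then uses discrete Fourier analysis on $\mathbb{F}_p$ to factor the u.d.\ condition as $\hat\mu_p\cdot\hat\nu_p=0$ on nontrivial characters. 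The key insight that the shift side is governed by a \emph{fixed} Laurent polynomial $F(z)=\sum_s z^{e_s}$ (so that $\hat\nu_p$ is nonvanishing once $p-1$ exceeds $\deg F$) reduces the problem to showing that $Q$ cannot be a permutation polynomial of $\mathbb{F}_p$ for all large $p$; you then dispatch this with Schur's elementary prime-divisor theorem and Dirichlet. Your route is more self-contained---it avoids both Hilbert irreducibility and Chebotarev in favour of a cyclotomic degree bound, Schur's two-line lemma, and Dirichlet---and it isolates the structural obstruction (permutation polynomials) explicitly; the paper's route is shorter to state once one is willing to cite those two heavier theorems, and it gives a concrete residue class with an explicit surplus of preimages rather than passing through Fourier.
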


\begin{proof}
Suppose on the contrary that $P(x)=\sum_{i=0}^n a_ix^i$ such that $a_i=r_i/s_i \in \mathbb{Q}$ with $\gcd(r_i,s_i)=1$ for all $1\leq i \leq n$. Let $N$ be the least common multiple of $s_i$, $1\leq i \leq n$. Since the sequence $(\lfloor P(k) \rfloor )_{k\geq 1}$ is assumed to be u.d.\ mod all primes large enough, the sequence $( N \lfloor P(k) \rfloor )_{k\geq 1}$ is u.d.\ mod all primes $p$ large enough. The value $N \lfloor P(k) \rfloor$ is periodic modulo $p$ with period $Np$. Therefore, it follows from the uniform distribution of $( N \lfloor P(k) \rfloor )_{k\geq 1}$ modulo $p$ that, with ${\mathcal U}=\{0,\ldots, p-1\} \times \{0,\ldots, N-1\}$, we have
\begin{equation}\label{countN}
\#\{(t,j) \in {\mathcal U}: N \lfloor P(Nt+j) \rfloor \equiv i \pmod p \}=N,
\end{equation}
for every integer $i$. Let $P_j(x)=N \lfloor P(Nx+j) \rfloor \in \mathbb{Z}[x]$ for $0\leq j <N$. Choose $M$ large enough so that the polynomials $f_j(x)=P_j(x)+M$, $0\leq j<N$, are all irreducible over $\mathbb{Q}[x]$ (the existence of $M$ follows from Hilbert's irreducibility theorem \cite[ch.\ 9]{HIT}). Let $f(x)=f_0(x) \cdots f_{N-1}(x)$ and
  $$R=\prod_{0 \leq i<j <N} \Res(f_i,f_j) \in \mathbb{Z},$$
 where $\Res(f_i,f_j)$ is the resultant of polynomials $f_i$ and $f_j$. Since $f_i$ and $f_j$ as irreducible polynomials in $\mathbb{Q}[x]$ have no common zeros for all $0 \leq i<j<N$, we must have $\Res(f_i,f_j) \neq 0$, and so $R\neq 0$. Therefore, for any prime $p>p_0$, $\Res(f_i, f_j) \not \equiv 0$ (mod $p$), where $p_0$ is the greatest prime factor of $R$. In other words, for any prime $p>p_0$, the polynomials $f_j$, $0\leq j<N$, have no common zeros modulo $p$. By the Chebotarev density theorem \cite{FriedJarden,LenstraStevenhagen}, there exist infinitely many primes $p$ such that $f(x)$ splits completely into $nN$ linear factors modulo $p$. It follows that there exists arbitrarily large $p>p_0$ such that $f(x)$ has $nN$ distinct zeros modulo $p$. Therefore, the number of solutions of $N \lfloor P(Nt+j) \rfloor \equiv -M$ (mod $p$) is at least $nN>N$. This is in contradiction with equation \eqref{countN}, and the claim follows.
\end{proof}

\section{Complete even-degree polynomials}\label{compevendeg}
Let $P(x)$ be a polynomial such that $P(x)-P(0) \in \mathbb{Q}[x]$. Since the sequence $(\lfloor P(k) \rfloor)_{k\geq 1}$ modulo any integer $m$ is periodic,
it follows from Definition \ref{defcomplete} that the polynomial $P(x)$ is complete modulo $m$ if and only if 
\begin{equation}\label{defgpp}
\lim_{N \rightarrow \infty}\dfrac{1}{N} \# \left \{ k \in \{1,\ldots, N\}: \lfloor P(k) \rfloor \equiv i \pmod m \right \} >0,
\end{equation}
for every integer $i$. Condition \eqref{defgpp} is weaker than condition \eqref{defud}. Therefore, if $(\lfloor P(k) \rfloor )_{k\geq 1}$ is u.d.\ mod $m$, then $P(x)$ is complete modulo $m$. The converse is not true for linear polynomials as shown by the following theorem in comparison with Theorem \ref{linear2}. 
\begin{theorem}\label{linear3}
The linear polynomial $P(x)=\alpha x+ \beta$ is complete in $\mathbb{Z}$ if and only if either $| \alpha | \in (0,1]$ or $\alpha$ is irrational. 
\end{theorem}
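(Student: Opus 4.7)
The plan is to split into cases based on the arithmetic nature of $\alpha$. If $\alpha$ is irrational, Example \ref{weylpol} shows that $(\lfloor P(k) \rfloor)_{k\geq 1}$ is u.d.\ in $\mathbb{Z}$, hence $P$ is complete in $\mathbb{Z}$. If $\alpha = 0$, the sequence is constant and $P$ fails to be complete modulo any $m \geq 2$, consistent with the stated equivalence.

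Next I would handle the case $0 < |\alpha| \leq 1$ by examining consecutive differences. A short check shows that $\lfloor P(k+1) \rfloor - \lfloor P(k) \rfloor \in \{0,1\}$ when $\alpha > 0$ and $\in \{-1,0\}$ when $\alpha < 0$. Since $\lfloor P(k)\rfloor$ marches monotonically to $\pm \infty$ in steps of size at most $1$, it must attain every sufficiently large (respectively, sufficiently negative) integer, and so every residue class modulo every $m \geq 2$ is hit. This gives completeness in $\mathbb{Z}$.

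For the converse, assume $\alpha = a/b$ is rational in lowest terms, $b > 0$, with $|\alpha| > 1$, so that $|a| > b$. The key observation is the periodicity identity
\[
\lfloor P(k+b) \rfloor = \lfloor \alpha k + \beta + \alpha b \rfloor = a + \lfloor P(k) \rfloor,
\]
which holds because $\alpha b = a \in \mathbb{Z}$. Reducing modulo $|a|$, the residue $\lfloor P(k)\rfloor \bmod |a|$ depends only on $k \bmod b$, so the sequence $(\lfloor P(k) \rfloor \bmod |a|)_{k\geq 1}$ attains at most $b$ distinct values. Since $|a| > b$, at least one residue class modulo $|a|$ is missed, so $P$ is not complete modulo $|a|$ and therefore not complete in $\mathbb{Z}$.

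I do not anticipate a substantial obstacle: the heart of the argument is the pigeonhole step made possible by the $\mathbb{Z}$-valued shift $\alpha b = a$, together with the step-size-one observation that handles the complementary direction.
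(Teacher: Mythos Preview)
Your proof is correct. The necessity direction (rational $|\alpha|>1$ implies not complete) matches the paper's argument exactly: both exploit the shift $k\mapsto k+b$ to see that $\lfloor P(k)\rfloor \bmod |a|$ depends only on $k\bmod b$, hence takes at most $b<|a|$ values.

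For the sufficiency direction you take a genuinely different, more elementary route. The paper, working with rational $\alpha=a/b$ with $b\geq |a|$, first shows that the $b$ numbers $\lfloor al/b+\beta\rfloor$, $0\leq l<b$, contain $|a|$ consecutive integers $s,\dots,s+|a|-1$, and then explicitly constructs a solution of $\lfloor P(x)\rfloor\equiv i\pmod j$ by choosing $t$ with $tj+i-s=aq+u$ and setting $x=bq+l$. Your consecutive-differences observation bypasses this construction entirely: since $\lfloor P(k+1)\rfloor-\lfloor P(k)\rfloor\in\{0,\pm 1\}$ and the sequence is monotone and unbounded, it eventually hits every integer beyond some point, hence every residue class modulo every $m$. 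This is shorter, and it handles rational and irrational $\alpha$ with $0<|\alpha|\leq 1$ uniformly, whereas the paper's argument for this half is tailored to the rational case (the irrational case having already been dispatched via uniform distribution). The paper's approach, on the other hand, yields an explicit $x$ for each target congruence, which your argument does not directly provide.
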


\begin{proof}
If $\alpha$ is irrational, then the claim follows from Theorem \ref{linear2}. Thus, suppose $\alpha=a/b$ where $a,b$ are coprime integers, $a\neq 0$, and $b>0$. Suppose that $P(x)$ is complete in $\mathbb{Z}$, and so the set $\{\lfloor \alpha k + \beta \rfloor: k\geq 1\}$ contains the numbers $0,\ldots, |a|-1$ modulo $|a|$. Let $k=bq+l$ where $0\leq l <b$. Then
$$\left \lfloor \dfrac{a}{b}(bq+l) + \beta \right \rfloor=aq+\left \lfloor \dfrac{al}{b}+\beta \right \rfloor.$$
Therefore, the $b$ numbers $\lfloor al/b +\beta \rfloor,~0\leq l<b$, must contain the numbers $0, \ldots, |a|-1$  modulo $|a|$. In particular $b \geq |a|$ and so $\alpha \in [-1,0) \cup (0,1]$. 

For the converse, suppose $b\geq |a|$. Then the numbers $al/b +\beta,~0\leq l \leq b$ are apart by $|a/b| \leq 1$, and they stretch from $\beta $ to $a+\beta$. Therefore, the numbers $\lfloor al/b+\beta \rfloor$, $0\leq l <b$, include $|a|$ consecutive integers, say $s,\ldots, s+|a|-1$. Given any $i,j \in \mathbb{Z}$, we show that there exists an integer $x$ such that $\lfloor ax/b+\beta \rfloor \equiv i$ (mod $j$). We choose $t \in \mathbb{Z}$ such that $|tj+i-s|>|a|$ and $tj+i-s$ has the same sign as $a$. Then, write $tj+i-s=aq+u$, where $q\geq 1$ and $u \in \{0,\ldots, |a|-1\}$. Since there exists an integer $0 \leq l <b$ such that $\lfloor al/b+\beta \rfloor =s+u$, with $x=bq+l$, we have $\lfloor ax/b+\beta \rfloor =aq + \lfloor al/b+\beta \rfloor  \equiv aq+s+u \equiv i$ (mod $j$). It follows that $P(x)$ is complete in $\mathbb{Z}$, and the proof is completed. 
\end{proof}

To prove Theorem \ref{gppmain}, we need the following two lemmas.

\begin{lemma}\label{primeRp}
Let $R(x)$ be a polynomial with integer coefficients and no real zeros. Then there exist infinitely many primes $p$ such that $R(x)$ has no zeros modulo $p$.
\end{lemma}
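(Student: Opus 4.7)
The plan is to apply the Chebotarev density theorem, in the same spirit as in the proof of Theorem \ref{nonlinear2}. If $R$ is a nonzero constant, any prime not dividing $R$ works, so I may assume $R$ has positive degree (and hence even degree, since $R$ has no real roots). Let $K \subset \mathbb{C}$ be the splitting field of $R$ over $\mathbb{Q}$, and set $G := \mathrm{Gal}(K/\mathbb{Q})$. Because $R$ has real coefficients, $K$ is stable under complex conjugation, which therefore restricts to an element $\tau \in G$. The crucial observation is that $\tau$ has \emph{no} fixed root among the roots of $R$: any such fixed root would be real, contradicting the hypothesis on $R$.

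Next, for any prime $p$ that is unramified in $K$ and does not divide the leading coefficient of $R$, the cycle structure of the Frobenius conjugacy class $\mathrm{Frob}_p$ acting on the roots of $R$ matches the degree sequence of the irreducible factors of $R$ modulo $p$; in particular $R$ has a root in $\mathbb{F}_p$ if and only if $\mathrm{Frob}_p$ fixes some root of $R$. By the Chebotarev density theorem, the set of primes $p$ whose Frobenius class equals the conjugacy class $C$ of $\tau$ has positive density $|C|/|G|$, and is therefore infinite. Since every element of $C$ acts fixed-point-freely on the roots of $R$, for each such $p$ (outside the finite exceptional set of ramified primes and primes dividing the leading coefficient) the polynomial $R$ has no zero modulo $p$, as required.

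The only step that needs explicit justification beyond this is the standard correspondence between the factorization of $R$ modulo an unramified prime and the cycle structure of the Frobenius; this is a classical fact (see e.g.\ \cite{LenstraStevenhagen}, already cited in the paper) used implicitly in Theorem \ref{nonlinear2}. Given that, the argument is immediate: Chebotarev combined with the elementary remark that complex conjugation cannot fix a non-real root. I do not anticipate any real obstacle; the main conceptual content is recognizing that ``no real roots'' is precisely the condition that makes complex conjugation a fixed-point-free element of $G$ on the root set.
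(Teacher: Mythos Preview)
Your proof is correct and follows essentially the same route as the paper: both arguments hinge on the observation that complex conjugation acts without fixed points on the roots of $R$ (precisely because $R$ has no real zeros), and then invoke the Chebotarev density theorem to produce infinitely many primes whose Frobenius lies in that conjugacy class. The paper phrases this as a proof by contradiction (if $R$ had a root modulo all large primes, every Galois element would fix a root), while you argue directly, but the content is identical.
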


\begin{proof}
Suppose on the contrary that $R(x)$ has a zero modulo all primes large enough. It follows from the Chebotarev density theorem \cite{FriedJarden,LenstraStevenhagen} that every element of the Galois group of the splitting field of $R(x)$ has a fixed point in the action on the zeros. In particular, complex conjugation must have a fixed point on the set of the zeros of $R(x)$, which contradicts our assumption that $R(x)$ has no real zeros.
\end{proof}

\begin{lemma}\label{Qa}
Let $Q(x)$ be a polynomial of even degree with integer coefficients, and let $A_0,\ldots, A_{N-1} \in \mathbb{Z}$. Then, there exist an arbitrarily large prime $p$ and an integer $m$ such that $Q(x)+A_i \not \equiv m$ (mod $p$) for all $x\in \mathbb{Z}$ and $i \in \{0,\ldots, N-1\}$. 
\end{lemma}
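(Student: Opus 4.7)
The plan is to reduce the lemma to Lemma \ref{primeRp} by packaging all the conditions into a single polynomial. For any integer $m$, form
\[
R_m(x) := \prod_{i=0}^{N-1}\bigl(Q(x)+A_i-m\bigr) \in \mathbb{Z}[x].
\]
A zero of $R_m$ modulo a prime $p$ is exactly an $x$ for which $Q(x)+A_i \equiv m \pmod p$ for some $i \in \{0,\ldots,N-1\}$. So if I can choose an integer $m$ for which $R_m$ has \emph{no real zeros}, then Lemma \ref{primeRp} hands me infinitely many (in particular, arbitrarily large) primes $p$ for which $R_m$ has no zeros modulo $p$, and that $m$ together with any such $p$ is the desired pair.

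The task therefore reduces to producing an integer $m$ such that every factor $Q(x)+A_i-m$ is nowhere-zero on $\mathbb{R}$. This is exactly where the even-degree hypothesis is used. Without loss of generality, assume the leading coefficient of $Q$ is positive; then $M_0 := \min_{x\in \mathbb{R}} Q(x)$ is a finite real number. Pick any integer $m$ with $m < M_0 + \min_{i} A_i$. For each $i$ one has $m - A_i < M_0 \leq Q(x)$ for every real $x$, hence $Q(x)+A_i-m > 0$ for all real $x$, and each factor of $R_m$ is real-zero-free. The case of negative leading coefficient is symmetric: take an integer $m > \max_{x\in\mathbb{R}} Q(x) + \max_i A_i$ so that $Q(x)+A_i-m < 0$ throughout.

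There is no real obstacle beyond this: $R_m \in \mathbb{Z}[x]$ automatically since $Q \in \mathbb{Z}[x]$ and $A_i,m \in \mathbb{Z}$, so Lemma \ref{primeRp} applies directly. What is worth emphasizing is that the even-degree hypothesis is indispensable for this packaging: an odd-degree $Q$ is surjective onto $\mathbb{R}$, making every $Q(x)+A_i-m$ have a real root, so no choice of $m$ would produce a real-zero-free $R_m$. Thus the parity of $\deg Q$ enters only through the one-sided boundedness of $Q$ on $\mathbb{R}$, and the Chebotarev-based Lemma \ref{primeRp} handles the rest.
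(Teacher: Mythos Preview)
Your proof is correct and follows essentially the same route as the paper: form the product $R_m(x)=\prod_i (Q(x)+A_i-m)$, use the even-degree hypothesis to choose $m$ making $R_m$ real-zero-free, and then invoke Lemma~\ref{primeRp}. The only difference is cosmetic---the paper writes $m=-M$ and states the existence of a suitable $M$ without spelling out the min/max argument you give.
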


\begin{proof}
Choose $M \in \mathbb{Z}$ so that $Q(x)+M+A_i$ has no real zeros for all $0\leq i<N$. We let 
$$R(x)=(Q(x)+M+A_0)\cdots (Q(x)+M+A_{N-1}).$$
Then $R(x)$ has no real zeros. By Lemma \ref{primeRp}, there exists an arbitrarily large prime $p$ such that $R(x) \not \equiv 0$ (mod $p$) for all $x \in \mathbb{Z}$. It follows that $Q(x)+A_i \neq -M$ for all $x\in \mathbb{Z}$ and $i\in \{0,\ldots, N-1\}$. 
\end{proof}

We are now ready to prove Theorem \ref{gppmain}. 
\begin{proof} 
Let $P(x)=\sum_{i=0}^n a_ix^i$ such that $a_i=r_i/s_i \in \mathbb{Q}$ with $\gcd(r_i,s_i)=1$ for all $1\leq i \leq n$. Let $N$ be the least common multiple of $s_i$, $1\leq i \leq n$. One has
$$\lfloor P(Nk+j) \rfloor =\lfloor P(j) \rfloor +\sum_{i=1}^n \dfrac{r_i}{s_i}((Nk+j)^i-j^i).$$
And so
\begin{align}\nonumber
N\lfloor P(Nk+j) \rfloor &=N\lfloor P(j) \rfloor +N\sum_{i=1}^n \dfrac{r_i}{s_i}((Nk+j)^i-j^i). \\ \nonumber
&= N\lfloor P(j) \rfloor -\sum_{i=1}^n r_i \dfrac{N}{s_i}j^i +Q(Nk+j) \\ \label{np}
&= A_j +Q(Nk+j),
\end{align}
 where $Q(x)=N(P(x)-P(0)) \in \mathbb{Z}[x]$ and $A_j \in \mathbb{Z}$ depending on $j$ and $P(x)$, $0\leq j <N$. By Lemma \ref{Qa}, there exist an arbitrarily large prime $p>N$ and an integer $m$ such that $Q(x)+A_j \not \equiv m$ (mod $p$) for all $x\in \mathbb{Z}$ and $j \in \{0,\ldots, N-1\}$. We claim that $P(x)$ is not complete modulo $p$. On the contrary, suppose there exists an integer $x$ such that $\lfloor P(x)\rfloor \equiv K$ (mod $p$), where $K$ is such that $NK \equiv m$ (mod $p$). But then, writing $x=Nk+j$ with $0\leq j <N$, we have $Q(Nk+j)+A_j \equiv N\lfloor P(Nk+j) \rfloor \equiv NK \equiv m$ (mod $p$). This is a contradiction, and so (i) implies (ii). The implication (ii) $\Rightarrow$ (iii) was discussed in Example \ref{weylpol}. The implications (iii) $\Rightarrow$ (iv) and (iv) $\Rightarrow$ (i) are straightforward, and the proof of Theorem \ref{gppmain} is completed. \end{proof}

\section{Complete monomials}\label{compmonom}

Let $p$ be a prime and $n$ be a positive integer that divides $p-1$. An $n$th power \emph{character} modulo $p$ is any homomorphism $\chi: \mathbb{Z}_p^* \rightarrow \mathbb{C}$ that is onto the group of $n$th roots of unity. By a theorem of A. Brauer \cite{RJ}, given $n,l \geq 1$, there exists a constant $z(n,l)$ such that for every prime $p>z(n,l)$ and any $n$th power character $\chi$ modulo $p$, there exists an integer $t$ such that 
\begin{equation}\label{xi}
\chi(t)=\chi(t+1)=\cdots =\chi(t+l-1).
\end{equation}

A number $x$ is an $n$th power residue modulo $p$, if there exists $y$ such that $x \equiv y^n$ (mod $p$). If $\chi$ is an $n$th power character modulo $p$ and $x$ is an $n$th power residue modulo $p$, then $\chi(x)=\chi(y^n)=(\chi(y))^n=1$. Therefore, to show that a number $z$ is not an $n$th power residue modulo $p$, it is sufficient to find an $n$th power character modulo $p$ such that $\chi(z) \neq 1$. We use this fact in the proof of the following lemma. 

\begin{lemma}\label{last}
For any positive integer $l$, there exist infinitely many primes $p$ such that all of the numbers $t,t+1,\ldots, t+l-1$ are $n$th power non-residues modulo $p$ for some positive integer $t$. 
\end{lemma}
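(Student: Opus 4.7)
The plan is to combine Brauer's theorem (equation \eqref{xi}) with the observation, noted just before the lemma, that a non-trivial character value certifies an $n$th power non-residue. Since Brauer's theorem presupposes an $n$th power character of exact order $n$, I restrict throughout to primes $p$ with $n \mid p-1$; by Dirichlet there are infinitely many such $p$. For each, fix a character $\chi$ of order $n$. Applying Brauer's theorem with length $l+1$, for every $p > z(n,l+1)$ there exists $t$ with $\chi(t) = \chi(t+1) = \cdots = \chi(t+l) = \omega$ for some $n$th root of unity $\omega$. If $\omega \neq 1$, then $t, t+1, \ldots, t+l-1$ are $l$ consecutive $n$th power non-residues modulo $p$, and we are done for $p$.

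The obstacle is the case $\omega = 1$, where $t, \ldots, t+l$ are all $n$th power residues. When $n$ is even, I plan to circumvent this by imposing the further congruence $p \not\equiv 1 \pmod{2n}$ (satisfied by infinitely many primes by Dirichlet); then $-1$ is not an $n$th power modulo $p$, so $\chi(-1) = -1 \neq 1$, and hence $\chi(-(t+i)) = -1$ for each $i = 0, \ldots, l$. The $l+1$ residues $-(t+l), -(t+l-1), \ldots, -t \pmod p$ are then consecutive integers in $\{1, \ldots, p-1\}$, all $n$th power non-residues, and any $l$ of them yield the required run.

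When $n$ is odd, however, $-1 = (-1)^n$ is automatically an $n$th power, forcing $\chi(-1) = 1$ and rendering the reflection trick useless. This is the main obstacle. To finish, I plan to bypass Brauer altogether and count directly: writing the non-residue indicator as $1 - \frac{1}{n}\sum_{j=0}^{n-1}\chi^j(x)$ for $x \not\equiv 0 \pmod p$, the number of starting positions $t \in [1,p-l]$ for which $t, t+1, \ldots, t+l-1$ are all non-residues equals
\begin{equation*}
N_p = \sum_{t=1}^{p-l}\prod_{i=0}^{l-1}\left(\frac{n-1}{n} - \frac{1}{n}\sum_{j=1}^{n-1}\chi^{j}(t+i)\right).
\end{equation*}
Expanding, the leading term contributes $(p-l)\left(\frac{n-1}{n}\right)^{l}$, while each remaining term has the form $\sum_{t} \chi\!\left(\prod_{i \in S}(t+i)^{j_i}\right)$ for some nonempty $S \subseteq \{0, \ldots, l-1\}$ and $(j_i) \in \{1, \ldots, n-1\}^{S}$. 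Since each such polynomial $\prod_{i \in S}(t+i)^{j_i}$ has distinct linear factors with exponents in $\{1, \ldots, n-1\}$, it is not an $n$th power in $\mathbb{F}_p[t]$; Weil's bound then gives $O_{n,l}(\sqrt p)$ for each such character sum, whence $N_p = p\left(\frac{n-1}{n}\right)^{l} + O_{n,l}(\sqrt p) > 0$ for all sufficiently large $p$, producing the required prime.
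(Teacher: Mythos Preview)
Your argument is correct, but it follows a genuinely different route from the paper's.

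The paper first reduces to prime $n$, then invokes Mills' theorem on characters with preassigned values to obtain infinitely many primes $p$ carrying an $n$th power character $\chi$ with $\chi(2)\neq 1$ while $\chi(p_i)=1$ for all small odd primes $p_i$. Brauer's theorem then yields $l$ consecutive integers with a common $\chi$-value; among any four consecutive integers one is $\equiv 2\pmod 4$, hence of the form $2\cdot(\text{odd})$, and the prescribed values force $\chi$ of that integer to equal $\chi(2)\neq 1$, so the common value is $\neq 1$ and all $l$ integers are non-residues. This stays within the Brauer/Mills circle of ideas and avoids any analytic input beyond Dirichlet.

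Your proof instead splits on the parity of $n$: for even $n$ you force $\chi(-1)=-1$ via the congruence $p\equiv 1+n\pmod{2n}$ and reflect a Brauer run of residues into a run of non-residues; for odd $n$ you abandon Brauer and count runs of non-residues directly, bounding the off-diagonal character sums $\sum_t \chi\bigl(\prod_{i\in S}(t+i)^{j_i}\bigr)$ by Weil. Two remarks: the Weil counting argument already works for \emph{all} $n\ge 2$ (nothing in it uses $n$ odd), so your even-$n$ detour is unnecessary; and you could have invoked Brauer with length $l$ rather than $l+1$, since the reflected run of length $l$ already suffices. The trade-off between the two approaches is that the paper's proof is more elementary (Mills follows from Dirichlet, Brauer is combinatorial), while your Weil argument imports a deeper theorem but is more uniform and quantitatively sharper, giving the expected density $\bigl(\tfrac{n-1}{n}\bigr)^{l}p$ of starting points rather than mere existence.
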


\begin{proof}
We can assume, without loss of generality, that $n$ is prime and $l\geq 4$. By a result of Mills \cite[Thm.\ 3]{Mills}, for every $m\geq 1$, there exist infinitely many primes $p$ with an $n$th power character $\chi$ modulo $p$ such that 
$$\chi(2)\neq 1, ~\forall 2 \leq i\leq m:~\chi(p_i)=1,$$
where $p_i$ is the $i$th prime. Let $t$ be defined by \eqref{xi}. We can choose $t>1$ by adding multiples of $p$ if necessary. Choose an integer $m$ large enough so that $p_m>t+l-1$. Choose $i \in \{0,\ldots, l-1\}$ such that $t+i-1=2(2d+1)$ for some integer $d$. Then
$$\chi(2(2d+1))=\chi(2)\chi(2d+1) \neq 1.$$
It then follows from equation \eqref{xi} that $\chi(t)=\chi(t+1)=\cdots=\chi(t+l-1) \neq 1$ i.e., none of the values $t,t+1,\ldots, t+l-1$, are $n$th power residues modulo $p$. 
\end{proof}

\begin{theorem}\label{mon}
Let $P(x)=ax^n+c$, where $a\in \mathbb{Q}$ and $c\in \mathbb{R}$. If $n>1$, then $P(x)$ is not complete modulo all primes large enough, hence $(\lfloor P(k) \rfloor )_{k\geq 1}$ is not u.d.\ in $\mathbb{Z}$.  
\end{theorem}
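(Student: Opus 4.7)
The plan is to extend the reduction used in the proof of Theorem~\ref{gppmain}. Write $a = r/s$ in lowest terms with $s > 0$. For $x = sk + j$ with $0 \le j < s$, the identity $sP(x) = rx^n + sc$ gives
\[
s\lfloor P(sk+j)\rfloor = A_j + r(sk+j)^n, \qquad A_j := s\lfloor P(j)\rfloor - rj^n = sc - s\{P(j)\}.
\]
Since $\{P(j)\} \in [0, 1)$, each $A_j$ lies in $(sc - s,\,sc]$, an interval containing exactly $s$ consecutive integers; hence there is an integer $L$ with $\{A_0, \ldots, A_{s-1}\} \subseteq \{L, L+1, \ldots, L+s-1\}$. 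Fix a prime divisor $n_1$ of $n$ and a prime $p > s$ with $n_1 \mid p - 1$, and let $H \subseteq \mathbb{Z}_p^*$ denote the subgroup of $n_1$-th power residues. Since every $n$-th power is an $n_1$-th power, $\{r m^n : m \in \mathbb{Z}_p\} \subseteq rH \cup \{0\}$. To prove $P$ is not complete modulo $p$ it therefore suffices to exhibit $M^* \in \mathbb{Z}_p$ with $M^* - A_j \notin rH \cup \{0\}$ for every $j$; the residue class $s^{-1}M^*$ is then missed by $\lfloor P \rfloor$ modulo $p$.

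When $n$ is even, Theorem~\ref{gppmain} already yields the conclusion (since $a \in \mathbb{Q}$), so assume $n$ is odd and take $n_1$ to be an odd prime divisor of $n$. We adapt the proof of Lemma~\ref{last}. Fix a pivot prime $q$ coprime to $r$. By Mills' theorem on characters with preassigned values, for any prescribed target values on a finite set of primes there are infinitely many primes $p$ admitting an $n_1$-th power character $\chi$ realizing them; prescribe $\chi(q)$ to be a primitive $n_1$-th root of unity and $\chi(q') = 1$ for every prime $q' \ne q$ below a bound $B$ chosen to exceed every prime factor of $r$ (and large enough for the step below). Then $\chi(r) = 1$, using $\chi(-1) = 1$ for odd $n_1$. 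Brauer's theorem, applied with exponent $n_1$ and run length $l = \max(s, q^2)$, supplies $t$ with $\chi(t) = \chi(t+1) = \cdots = \chi(t + l - 1)$; as in the proof of Lemma~\ref{last} with $q$ in place of $2$, some $t + i - 1$ in this window has $q$-adic valuation exactly $1$ with cofactor whose prime factors are at most $B$, so the common value equals $\chi(q) \ne 1 = \chi(r)$. In particular, $t, t+1, \ldots, t+s-1$ all lie in a single coset of $H$ distinct from $rH$.

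Finally, set $M^* \equiv L + s - 1 + t \pmod{p}$; then for every $j$, $M^* - A_j$ equals one of $t, t+1, \ldots, t+s-1$ in $\mathbb{Z}_p$ and hence avoids $rH \cup \{0\}$. The residue class $s^{-1}M^*$ is missed by $\lfloor P \rfloor$ modulo $p$, and since this construction yields arbitrarily large $p$, $P$ fails to be complete modulo all primes large enough. The main technical obstacle is the simultaneous control in the second paragraph: a single prime $p$ must carry both the identity $\chi(r) = 1$ and a Brauer-style run of $s$ consecutive integers in a single coset of $H$ distinct from $rH$. The flexibility of Mills' theorem---allowing $\chi$ to be prescribed on any finite set of primes, including the pivot $q$ coprime to $r$ and every prime factor of $r$---is what makes this possible.
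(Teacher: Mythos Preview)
Your argument is correct, but it takes a different route from the paper's proof. The paper never needs to arrange $\chi(r)=1$. Instead, writing $a=M/N$ with $M>0$ and $Q(x)=Mx^n$, it multiplies the identity $N\lfloor P(x)\rfloor = Q(x)+A_j$ through by $M^{n-1}$, so that the relevant value becomes $M^{n-1}Q(x)=(Mx)^n$, a genuine $n$th power. Then Lemma~\ref{last} is invoked purely as a black box: one only needs infinitely many primes admitting a run of $l=1+\max_i M^{n-1}A_i-\min_i M^{n-1}A_i$ consecutive $n$th-power non-residues, and the target $K=t+\max_i M^{n-1}A_i$ forces $K-M^{n-1}A_j$ to be a non-residue while simultaneously congruent to $(Mx)^n$, a contradiction. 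This handles all $n>1$ uniformly, with no even/odd split and no need to reopen the proof of Lemma~\ref{last}.

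Your approach trades the $M^{n-1}$ trick for extra control on the character: by choosing a pivot prime $q$ coprime to $r$ and prescribing $\chi$ to be trivial on all small primes except $q$, you force $\chi(r)=1$ (using $\chi(-1)=1$ when $n_1$ is odd), so that $rH=H$ and a run of non-residues suffices directly. This is a legitimate use of Mills' theorem, and your observation that the $A_j$ already lie among $s$ consecutive integers is a pleasant simplification over the paper's spread of $M^{n-1}A_j$. The cost is that you must redo the Brauer--Mills combination inside the proof with an extra constraint, and you must split off the even case to guarantee an odd $n_1$. The paper's multiplication device is the cleaner mechanism here: it keeps Lemma~\ref{last} modular and avoids any dependence of the character on the arithmetic of $r$.
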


\begin{proof}
Let $a=M/N$, where $M$ and $N$ are integers and $M>0$. Let $Q(x)=Mx^n$ and $A_0,\ldots, A_{N-1}$ be given by equation \eqref{np}. On the contrary, suppose $P(x)$ is complete modulo all primes $p$ large enough. By Lemma \ref{last}, for $l=1+\max_i M^{n-1}A_i - \min_i M^{n-1}A_i$, there exists an arbitrarily large prime $p>|MN|$ and an integer $t$ such that $t+j$ is not an $n$th power residue modulo $p$ for any $0\leq j <l$. 

Let $K=t+\max_i M^{n-1}A_i$, and choose $L$ such that $M^{n-1}L \equiv K$ (mod $p$). Since $P(x)$ is complete modulo $p$, there exists an integer $x$ such that $N \lfloor P(x) \rfloor \equiv L$ (mod $p$). Writing $x=Nk+j$ with $0\leq j<N$, we have
$$M^{n-1}(Q(x)+A_j) \equiv M^{n-1}N \lfloor P(x) \rfloor \equiv M^{n-1}L \equiv K \pmod p.$$
Since $t \leq K-M^{n-1}A_j<t+l$ and $K-M^{n-1}A_{j} \equiv M^{n-1}Q(x) \equiv (Mx)^n$ (mod $p$) is an $n$th power residue modulo $p$, we have a contradiction, and the claim follows.
\end{proof}

\begin{remark}
In light of the proofs of Theorems \ref{gppmain} and \ref{mon}, one can generalize Theorem \ref{gppmain} to all nonlinear polynomials if the following statement is true: Given a nonlinear polynomial $P(x)$ with integer coefficients and a positive integer $l$, there exist an arbitrarily large prime $p$ and a positive integer $k$ such that $P(x) \not \equiv k+i$ (mod $p$) for all $i=0,\ldots, l-1$. 
\end{remark}

\section{Acknowledgement}

I would like to thank Siena College for providing a sabbatical leave during which this work was completed. I would also like to thank the referee for many useful comments and suggestions.

\end{document}